\newtheorem{conjecture}{Conjecture}
\newtheorem{theorem}{Theorem}
\begin{document}
\title{Averages of ratios of the Riemann zeta-function and correlations of divisor sums}
 
\author{Brian Conrey}
\address{American Institute of Mathematics, 600 East Brokaw Rd., San Jose, CA 95112, USA and School of Mathematics, University of Bristol, Bristol BS8 1TW, UK}
\email{conrey@aimath.org}
\author{Jonathan P. Keating}
\address{School of Mathematics, University of Bristol, Bristol BS8 1TW, UK}
\email{j.p.keating@bristol.ac.uk}

\thanks{We gratefully acknowledge support under EPSRC Programme Grant EP/K034383/1
LMF: L-Functions and Modular Forms.  Research of the first author was also supported 
by the American Institute of Mathematics and by a grant from the National Science 
Foundation. JPK is grateful for the following additional support: a grant from the 
Leverhulme Trust, a Royal Society Wolfson Research 
Merit Award, and a Royal Society Leverhulme Senior Research Fellowship. He is also pleased to thank the American Institute of Mathematics for hospitality.}

\date{\today}

\begin{abstract} We establish a connection between the  ratios conjecture   for the Riemann zeta-function and a conjecture concerning correlations of convolutions of M\"{o}bius and divisor functions.  Specifically, we prove that the ratios conjecture and an arithmetic correlations conjecture imply the same result.  This provides new support for the ratios conjecture, which previously had been motivated by analogy with formulae in random matrix theory and by a heuristic recipe.  Our main theorem generalises a recent calculation pertaining to the special case of two-over-two ratios.
\end{abstract}

\maketitle

\section{Introduction and statement of results}  

Let $A$ and $B$ be sets of complex numbers with real parts smaller than $1/4$.
Let $C$ and $D$    be sets of complex numbers with positive real parts smaller than $1/4$.
The purpose of this paper is to  investigate the averages 
$$\mathcal R_{A,B,C,D}(T):=\int_0^\infty \psi\left(\frac t T\right) \frac{\prod_{\alpha\in A}\zeta(s+\alpha)\prod_{\beta\in B}\zeta(1-s+\beta)}{\prod_{\gamma\in C}\zeta(s+\gamma)
\prod_{\delta\in D}\zeta(1-s+\delta)}~dt$$
where $s=1/2+it$. $\mathcal R$ is the subject of the ``ratios conjecture'' originally formulated 
in [CFZ] and studied in [CS]. In these prior studies the perspective was from the point of view of analogy with Random Matrix Theory (RMT).
Our new perspective is to study this quantity from an arithmetic point of view. In particular, we identify those parts of the ratios conjecture that arise from a study of the coefficient correlations 
$$\sum_{n\le X} I_{A,C}(n) I_{B,D}(n+h)$$
where  $I_{A,C}$ is defined implicitly by 
$$\sum_{n=1}^\infty \frac{I_{A,C}(n)}{n^s}=\frac{\prod_{\alpha\in A}\zeta(s+\alpha)}{\prod_{\gamma\in C} \zeta(s+\gamma)}
. 
$$
In this paper we will describe this connection explicitly.

 Not surprisingly,  $\mathcal R$  is related to averages of the (analytic continuation of the) Rankin-Selberg convolution
\begin{eqnarray*}
\mathcal{B}_{A,B,C,D}(s):=\sum_{n=1}^\infty \frac{I_{A,C}(n)
I_{B,D}(n)}{n^s}.
\end{eqnarray*}
In fact, we can state the ratios conjecture in a relatively simple way in terms of $\mathcal B$.
\begin{conjecture} ([CFZ] and [CS]) 
Suppose that the sets $A,B,C$ and $D$ are as in the introduction and that the imaginary 
parts of all of the parameters in this set are $O(T^{1-\xi})$ for some $\xi>0$. Then 
\begin{eqnarray*} \label{eqn:rat}
\mathcal R_{A,B,C,D}(T)=\int_0^\infty  \psi\left(\frac t T\right) 
\sum_{U\subset A, V\subset B\atop
|U|=|V|} \left(\frac{t}{2\pi}\right)^{-\sum_{\hat \alpha\in U}\hat \alpha-\sum_{\hat \beta \in V}
\hat \beta}
\mathcal{B}_{A-U+V^-,B-V+U^-,C,D}(1)~dt+O(T^{1-\eta})
 \end{eqnarray*}
 for some $\eta>0$.  
 \end{conjecture}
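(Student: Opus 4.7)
The plan is to derive the right-hand side by expanding the rational zeta product as a two-variable Dirichlet series indexed by $n,m$, using the conjectured asymptotic for shifted correlations $\sum_{n\le X}I_{A,C}(n)I_{B,D}(n+h)$ on the inner sum, and picking up the terms indexed by nonempty $(U,V)$ from the ``swap rule'' of the CFZ recipe. Writing $s=\tfrac12+it$, one has formally
\begin{equation*}
\frac{\prod_{\alpha\in A}\zeta(s+\alpha)\prod_{\beta\in B}\zeta(1-s+\beta)}{\prod_{\gamma\in C}\zeta(s+\gamma)\prod_{\delta\in D}\zeta(1-s+\delta)}=\sum_{n,m}\frac{I_{A,C}(n)\,I_{B,D}(m)}{n^{s}m^{1-s}},
\end{equation*}
so the integrand of $\mathcal{R}_{A,B,C,D}(T)$ becomes $\sum_{n,m}I_{A,C}(n)I_{B,D}(m)(nm)^{-1/2}(m/n)^{it}$, up to divergence issues that are to be handled by truncating via approximate functional equations.

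Integration against $\psi(t/T)$ damps the phase $(m/n)^{it}$ unless $|\log(m/n)|\ll T^{-1+o(1)}$, so after the change of variable $m=n+h$ only pairs with $|h|\ll n/T^{1-o(1)}$ contribute. For fixed $h$ the inner sum over $n$ is, up to a smooth cutoff, precisely the correlation $\sum_n I_{A,C}(n)I_{B,D}(n+h)$, and inserting the asymptotic supplied by the arithmetic correlations conjecture produces a main term whose singular-series factor is analytic in the shifts in $A,B,C,D$. Summing these main terms over $h$ should reassemble, by analytic continuation of the local Euler factors, into the single Rankin--Selberg-type quantity $\mathcal{B}_{A,B,C,D}(1)$, which after the $t$-integration is restored yields the diagonal ($U=V=\emptyset$) contribution in the stated formula.

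The remaining terms arise from applying the functional equation $\zeta(s+\hat\alpha)=\chi(s+\hat\alpha)\zeta(1-s-\hat\alpha)$ to the factors indexed by a chosen $U\subset A$, and dually to $V\subset B$, before the Dirichlet-series expansion. Using $\chi(s+\hat\alpha)\sim(t/2\pi)^{-\hat\alpha}$ and $\chi(1-s+\hat\beta)\sim(t/2\pi)^{-\hat\beta}$ at $s=\tfrac12+it$, each swap contributes the factor $(t/2\pi)^{-\hat\alpha}$ or $(t/2\pi)^{-\hat\beta}$ and relabels the $\zeta$-factor as one indexed by $-\hat\alpha\in U^-$ or $-\hat\beta\in V^-$, moving it from the $A$-product to the $B$-product and conversely; the constraint $|U|=|V|$ then arises from matching the cardinalities of the $n$- and $m$-side Dirichlet series after this relocation. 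Repeating the correlations argument of the previous paragraph with the modified parameter sets yields $\mathcal{B}_{A-U+V^-,B-V+U^-,C,D}(1)$, and summing over all admissible $(U,V)$ gives the full main term.

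The hardest part will be showing that the error produced by the arithmetic correlations conjecture, taken uniformly in $h$ and in the shift parameters, combines with the truncation error from the Dirichlet series into the single $O(T^{1-\eta})$ advertised; equally delicate is the analytic-continuation step that identifies the singular series emerging for fixed $h$ with the local Euler factors of $\mathcal{B}$ at $s=1$, which is precisely the bridge between the arithmetic and zeta-function sides that the paper sets out to construct.
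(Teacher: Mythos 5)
There is a genuine mismatch here, at two levels. First, the statement you are addressing is a \emph{conjecture} in this paper: it is quoted from [CFZ] and [CS], it is not proved anywhere in the text, and no proof is known. The paper's actual results (Theorems 1 and 2) do not establish Conjecture 1; they show that a uniform version of Conjecture 1 and a uniform arithmetic correlations conjecture imply the \emph{same} asymptotic for the truncated average $\mathcal M_{A,B,C,D}(T;X)$, which is consistency evidence, not a proof. Your proposal, which assumes the shifted-correlation asymptotics as input, could therefore at best be an argument of the paper's Theorem 2 type (one conjecture implying consequences of another); presented as a proof of Conjecture 1 it is circular in a second way as well, since applying the functional equation to the zeta-factors indexed by $U$ and $V$ before expanding is exactly the CFZ recipe whose output the conjecture encodes.

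Second, even viewed as a conditional heuristic, the central step fails. You claim that inserting the asymptotic for the binary correlations $\sum_{n\le X} I_{A,C}(n)I_{B,D}(n+h)$ and ``summing over all admissible $(U,V)$'' recovers every swap term. But the pairwise shifted correlations only contain enough information to produce the zero-swap (diagonal) and one-swap terms: this is precisely the content of the paper's Theorem 1, where the delta-method expression $\mathcal C_{A,B,C,D}(s)$ is shown (on GRH, via the character decomposition of $e(m/q)$, the Euler-product analysis of $\sum_m I_{A,C}(mr)m^{-w}$, and the combinatorial identity of Theorem 3) to equal $\sum_{|U|=|V|=1}\mathcal B_{A-U+V^-,B-V+U^-,C,D}(s+1)$ --- the one-swap terms only. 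The terms with $|U|=|V|\ge 2$ do not emerge from $I_{A,C}(n)I_{B,D}(n+h)$ averages at all; they would require higher-order correlation inputs, and correspondingly the paper restricts Theorem 2 to $|U|=|V|\le 1$ and to truncations $X=T^\lambda$ with $\lambda>1$ but of limited length, working throughout with the truncated polynomials $\mathcal I_{A,C}(s;X)$ rather than the divergent half-line Dirichlet expansions your first display uses. The reassembly of the fixed-$h$ singular series into $\mathcal B$ at shifted arguments, which you defer to ``analytic continuation of the local Euler factors,'' is likewise not a routine step: it is the technical heart of the paper (Sections 4--5), and it produces only the one-swap structure, not the full sum over $U\subset A$, $V\subset B$ with $|U|=|V|$.
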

 Here $V^-$ denotes the set obtained from $V$ by replacing every element by its negative.
 So, the set $A-U+V^-$ may be obtained from the set $A$ by deleting the elements of $U$ and then 
 inserting the negatives of the elements of $V$.  (We assume that the elements of all of these sets are distinct. The situation with repeating elements may be deduced from this case by a limiting argument.)
 
  It is also not surprising that $\mathcal R$ is connected  to 
 weighted averages over $n$ and $h$ of
 $$  I_{A,C}(n)I_{B,D}(n+h).$$
It is this connection that we are elucidating.

Using  the $\delta$-method [DFI] it transpires that the  weighted averages relevant to the consideration of $\mathcal R$ may 
be expressed in terms of 
 \begin{eqnarray*}&&
 \mathcal C_{A,B,C,D}(s):=\frac{1}{(2\pi i)^2}\int_{|w-1|=\epsilon}
 \int_{|z-1|=\epsilon}\chi(w+z-s-1)
 \sum_{q=1}^\infty 
 \sum_{h=1}^\infty \frac{r_q(h)}{h^{s+2-w-z}} \\
 &&\qquad  \qquad \qquad \qquad \times 
  \sum_{m=1}^\infty \frac{I_{A,C}(m)e(m/q)}{m^w}
 \sum_{n=1}^\infty \frac{I_{B,D}(n)e(n/q)}{n^z}
 ~dw ~dz
  \end{eqnarray*}
  where $r_q(h)$ denotes Ramanujan's sum and where $\chi(s)$ is the factor 
  from the functional equation
  $\zeta(s)=\chi(s)\zeta(1-s)$; also here and elsewhere $\epsilon $ is chosen to be 
  larger than the absolute values of the shift parameters
  $\alpha,\beta,\gamma,\delta$ but smaller than $1/2$.
 
 The main conclusion of this paper is that the arithmetic contributions arising from the 
 averages of $I_{A,C}(n)I_{B,D}(n+h)$ coincide exactly with the terms from the ratios conjecture
 with $|U|=|V|=1$, i.e. what are referred to elsewhere as the ``one-swap'' terms.
    
  The  result that explicates this  is encapsulated in the following identity.
    \begin{theorem}
  Assuming the Generalized Riemann Hypothesis
  \begin{eqnarray*}
  \mathcal C_{A,B,C,D}(s)=\sum_{U\subset A, V\subset B\atop
  |U|=|V|=1}
  \mathcal{B}_{A-U+V^-,B-V+U^-,C,D}(s+1).
  \end{eqnarray*}
  \end{theorem}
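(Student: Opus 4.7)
The approach is to evaluate the double contour integral defining $\mathcal{C}_{A,B,C,D}(s)$ by residues and to identify each residue contribution with the corresponding term on the right-hand side. I would begin by analytically continuing the inner series $\sum_m I_{A,C}(m) e(m/q)/m^w$ in $w$. Decomposing the additive character via
\[
e(m/q) = \frac{1}{\phi(q)} \sum_{\chi \bmod q}\tau(\bar\chi)\chi(m)\qquad((m,q)=1),
\]
with a M\"obius-type modification for $(m,q)>1$, rewrites this Dirichlet series as a finite combination of the $L$-function ratios $\prod_{\alpha\in A}L(w+\alpha,\chi)/\prod_{\gamma\in C}L(w+\gamma,\chi)$, weighted by Gauss sums $\tau(\bar\chi)$ and by Euler-product corrections at primes dividing $q$. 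Under GRH the denominator $L$-functions are nonzero inside $|w-1|\le\epsilon$ since $\Re(\gamma)>0$ pushes the critical line out of that disk, and the non-principal numerator $L$-functions are holomorphic there. Hence the only singularities inside the disk are simple poles at $w = 1-\alpha$ for $\alpha\in A$, arising from $L(w+\alpha,\chi_0)$ for the principal character modulo $q$; the analogous story holds in $z$. By Cauchy's theorem the double integral collapses into a double sum indexed by $(\alpha,\beta)\in A\times B$, which is precisely the set $\{(U,V):U\subset A,\,V\subset B,\,|U|=|V|=1\}$.

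For fixed $(\alpha,\beta)$, I would then compute the residue at $(w,z) = (1-\alpha,1-\beta)$. The principal-character residues contribute factors of $\phi(q)/q$ that cancel the $\phi(q)^{-1}$ from the character decomposition, leaving a Gauss-sum weight $\tau(\bar\chi_0)=\mu(q)$. The remaining $L$-values at the principal character furnish $\zeta$-values at shifts within $A\setminus\{\alpha\}$, $B\setminus\{\beta\}$, $C$, $D$, together with local Euler corrections at primes $p\mid q$. The pre-factor $\chi(w+z-s-1)$ becomes $\chi(1-\alpha-\beta-s)$, which, via the functional equation $\zeta(u) = \chi(u)\zeta(1-u)$, can be absorbed into the $\sum_h r_q(h)/h^{s+2-w-z}$ sum and into the remaining $\zeta$-values, flipping their arguments across the critical line. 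The outer sum over $q$ is then closed using Ramanujan-sum identities such as $\sum_q r_q(h)/q^a = \sigma_{1-a}(h)/\zeta(a)$, producing a single Dirichlet series in $s$.

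The conceptual punchline is that the residue operation and the $\chi$-pivot together implement the one-swap bijection: deleting $\alpha$ from the $A$-slot (taking the residue) and reflecting the $\beta$-shift through the functional equation inserts $-\beta$ in its place, producing $A-\{\alpha\}+\{-\beta\}$; the symmetric move yields $B-\{\beta\}+\{-\alpha\}$. Matching the resulting Dirichlet series with $\mathcal{B}_{A-\{\alpha\}+\{-\beta\},\,B-\{\beta\}+\{-\alpha\},\,C,D}(s+1)$ then requires verifying the identity prime-by-prime, and this Euler-factor matching at the ramified primes $p\mid q$ is the principal obstacle. At such primes the additive character $e(m/q)$ interacts nontrivially with the Euler factor of $I_{A,C}$, the $(m,q)>1$ contribution produces its own correction, and the sum over $q$ divisible by $p$ adds a further piece; checking that these combine to exactly the local factor of $\mathcal{B}$ at $p$ with the swapped shifts is the arithmetic content of the theorem and the technical heart of the proof.
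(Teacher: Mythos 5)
Your outline reproduces the first half of the paper's argument quite faithfully: the paper also passes to characters via $e(m/q)=\sum_{d\mid m,\,d\mid q}\phi(q/d)^{-1}\sum_{\chi\bmod q/d}\tau(\bar\chi)\chi(m/d)$, invokes GRH so that only the principal characters contribute poles near $w=1$ and $z=1$ (at $w=1-\hat\alpha$, $z=1-\hat\beta$), collapses the double contour integral to the double sum over $(\hat\alpha,\hat\beta)\in A\times B$, and uses the functional equation to turn $\chi(w+z-s-1)\sum_h r_q(h)h^{-(s+2-w-z)}$ into $q^{w+z-s-1}\Phi(w+z-s-1,q)\zeta(w+z-s-1)$. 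Up to that point you and the paper are on the same track, and your identification of the one-swap structure is correct.

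The genuine gap is that the step you defer as ``the principal obstacle'' and ``the technical heart of the proof'' is precisely the content of the theorem, and your proposal never carries it out. In the paper this matching is done explicitly: the residues are organized into local factors $\mathcal E_{A,C}(1-\hat\alpha,q)$ built from $E_{A,C}(1-\hat\alpha,qe/d)$, which are evaluated at prime powers using the recursion $I_{A,C}(p^J)=I_{A',C}(p^J)+p^{-\hat\alpha}I_{A,C}(p^{J-1})$ (with $A'=A\setminus\{\hat\alpha\}$), and the resulting $q$-Euler product is shown to equal the local factor of $\mathcal B_{A'\cup\{-\hat\beta\},\,B'\cup\{-\hat\alpha\},\,C,D}(s+1)$ by a separate power-series identity (the paper's Theorem 3, with $X=1/p$, $Y=p^{-\hat\alpha}$, $Z=p^{-\hat\beta}$), whose hypotheses $\tilde a_\ell=\sum_{J\le\ell}Z^{J-\ell}a'_J$ encode exactly how adjoining $-\hat\beta$ to $A'$ acts on the coefficients. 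Without this verification (or an equivalent one) the claimed equality is asserted, not proved. Moreover, your suggested shortcut of closing the $q$-sum with $\sum_q r_q(h)q^{-a}=\sigma_{1-a}(h)/\zeta(a)$ does not apply as stated: after taking residues the summand carries the $q$-dependent correction factors $\mathcal E_{A,C}$, $\mathcal E_{B,D}$ and $\Phi$, so the $q$-sum must be treated as a full Euler product jointly with these ramified local factors, which is exactly what the paper's Sections 4--5 do; the clean Ramanujan-sum identity alone will not produce the swapped $\mathcal B$-series.
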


  It turns out to be convenient  to study an average of the ratios conjecture. To this end
let
$$\mathcal I_{A,C}(s;X)= \sum_{n\le X}
I_{A,C}(n)n^{-s}.
$$
We are interested in the average over $t$ of $\mathcal I_{A,C}(s,X)\overline{\mathcal I_{B,D}}(1-s,X)$   (N.B.~$s=1/2+it$) in the   case  that $X=T^\lambda$ for some $\lambda>1$ .
(When $\lambda<1$ this average is dominated by  diagonal terms.)
We give two different  treatments of the average of ``truncated'' ratios:
$$
\mathcal M_{A,B,C,D}(T;X):=\int_0^\infty \psi\left(\frac t T\right)  \mathcal I_{A,C}(s,X)
  \mathcal I_{B,D}(1-s,X)~dt$$
 (where again $s=1/2+it$) which lead to the same answer. The first is by the ratios conjecture and 
  the second is by consideration of the correlations of the coefficients.

  In each case we prove 
  \begin{theorem} Let $A,B,C,D$ be as above.
  Then, assuming either a uniform version of the ratios conjecture 
  or a uniform version of the conjectural formula for correlations of values of $I_{\alpha,\gamma}(n)$, we have for some $\eta>0$ and some $\lambda>1$,
  \begin{eqnarray*} &&
  \mathcal M_{ A,B,C,D}(T;X)=\\
  &&   \quad 
  \int_0^\infty  \psi\left(\frac t T\right) \frac{1}{2\pi i}\int_{\Re s=2}
  \sum_{U\subset A, V\subset B\atop
|U|=|V|\le 1} \left(\frac{t}{2\pi }\right)^{-|U|s-\sum_{\hat \alpha\in U}\hat \alpha-\sum_{\hat \beta \in V}
\hat \beta}
\mathcal{B}_{A-U+V^-,B-V+U^-,C,D}(s+1)\frac{X^s}{s}~ds~dt\\
&&\qquad \qquad +O(T^{1-\eta}).
  \end{eqnarray*}
   \end{theorem}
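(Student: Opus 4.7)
The plan is to carry out both derivations promised by the theorem statement and verify that they land on the same expression.

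For the \emph{ratios route}, first apply Perron's formula to each truncated series,
$$\mathcal I_{A,C}(s,X)=\frac{1}{2\pi i}\int_{(c)}\frac{\prod_{\alpha\in A}\zeta(s+w+\alpha)}{\prod_{\gamma\in C}\zeta(s+w+\gamma)}\frac{X^{w}}{w}\,dw,$$
with a parallel identity for $\mathcal I_{B,D}(1-s,X)$ in a variable $w'$. Substituting these into $\mathcal M_{A,B,C,D}(T;X)$ and interchanging with the $t$-integration (justified by taking $c,c'$ large) presents $\mathcal M$ as a double contour integral whose inner piece is $\mathcal R_{A+w,\,B+w',\,C+w,\,D+w'}(T)$, the sets being shifted uniformly by $w$ or $w'$. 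I would then invoke a uniform form of the ratios conjecture to replace the inner quantity by its conjectured main term, obtaining a sum over pairs $(U,V)\subset A\times B$ with $|U|=|V|$. Next I would collapse one of the two new contours: the pole at $w'=0$ produces the $X^{w}/w$ structure from the other Perron integral (relabelled as $X^s/s$ with $s:=w$), and after collecting the $w$- and $w'$-dependence in the $(t/2\pi)^{-\sum U-\sum V}$ factor, the exponent re-assembles to $-|U|s-\sum_{\hat\alpha\in U}\hat\alpha-\sum_{\hat\beta\in V}\hat\beta$ exactly as stated. For $|U|=|V|\ge 2$ the resulting exponent forces a $T$-power $T^{1-|U|\Re s+O(1)}$, which on the fixed line $\Re s=2$ is absorbed into $O(T^{1-\eta})$ provided $X=T^\lambda$ with $\lambda$ chosen suitably; only $|U|=|V|\le 1$ survives.

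For the \emph{correlations route}, expand the integrand directly:
$$\mathcal M_{A,B,C,D}(T;X)=\sum_{m,n\le X}\frac{I_{A,C}(m)I_{B,D}(n)}{(mn)^{1/2}}\int_0^\infty \psi(t/T)(n/m)^{it}\,dt,$$
with the inner transform $T\widehat\psi\!\left(\tfrac{T}{2\pi}\log(n/m)\right)$ localising the mass to $|n-m|\ll m/T$. Split into the diagonal $m=n$ and off-diagonal $n=m+h$, $h\ne0$. The diagonal gives $T\widehat\psi(0)\sum_{m\le X}I_{A,C}(m)I_{B,D}(m)/m$, which by Perron inversion is exactly the $|U|=|V|=0$ piece of the claimed main term. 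For the off-diagonal, apply the $\delta$-expansion of [DFI] to rewrite $\sum_{m}I_{A,C}(m)I_{B,D}(m+h)$ in the form defining $\mathcal C_{A,B,C,D}$; a uniform correlations conjecture then permits this replacement inside the $h$- and $m$-sums, and Theorem 1 converts the resulting arithmetic expression into the $|U|=|V|=1$ Dirichlet-series side, matching the ratios derivation term-by-term.

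The principal obstacle in both routes is \emph{uniformity}. The conjectural formulae must hold with explicit error savings when the shift parameters are perturbed by complex quantities $w,w'$ of size up to $(\log T)^{A}$, and the truncation $X=T^\lambda$ must sit in a range where the conjectured errors survive the contour manipulations and the Mellin inversion on $\Re s=2$. The secondary technical point is contour management: under GRH the shifted contours must avoid the zero lines of the $\zeta$-factors in the numerators and denominators, and one must verify that residues at non-central poles contribute only $O(T^{1-\eta})$. With these uniformities in hand the \emph{matching} of the two independently derived main terms is not an extra obstacle but follows from Theorem 1, which is the precise arithmetic-to-analytic bridge the whole argument requires.
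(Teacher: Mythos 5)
Your proposal follows essentially the same two routes as the paper: Perron inversion combined with a uniform ratios conjecture, using the fact that the conjectured main term depends only on $w+w'$ (the paper implements this via the change of variables $s=w+z$ and the evaluation $\frac{1}{2\pi i}\int_{\Re z=2}\frac{dz}{z(s-z)}=\frac{1}{s}$, whereas you take the residue at $w'=0$ --- a cosmetic difference), together with a diagonal/off-diagonal split in which the diagonal gives the $|U|=|V|=0$ term by Perron and the off-diagonal is handled by the $\delta$-method correlations conjecture plus Theorem 1 to produce the one-swap terms. This is the paper's argument, carried out at the same heuristic level of rigor (including glossing over, as the paper does, the precise identification of the swapped $\mathcal B$-factors and the poles crossed in the final contour moves).
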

  
  This shows that the ratios conjecture follows not only from the `recipe' of [CFZ], but also relates to  correlations of values of $I_{A,C}(n)$.
  
  An earlier paper [CK5] had this calculation but in the special case that all of the sets $A,B,C,D$ are singletons. That paper has additional background information and motivation, in particular relating to connections with correlations between the zeros (c.f.~[BK1, BK2]) and with the moments (c.f.~[CK1, CK2, CK3, CK4]) of the zeta function.  The first part of the present calculation follows closely that in [CK5].
  
  \section{Approach via the ratios conjecture}
  
  We have
  $$\mathcal I_{A,C}(s,X)=\frac{1}{2\pi i} \int_{(2)} 
  \mathcal I_{A,C}(s+w)\frac{X^w}{w}~dw;
  $$
  there is a similar expression for $\mathcal I_{B,D}(s,X)$. 
  Inserting these expressions and rearranging the integrations we have 
  \begin{eqnarray*}
  \mathcal M_{A,B,C,D}(T;X)=\frac{1}{(2\pi i)^2}
  \int_{\Re w=2}\int_{\Re z=2} \frac{X^{w+z}}{wz} \mathcal R_{
A_w,B_z,C_w,D_z}(T)  ~dw ~dz.
\end{eqnarray*}
We note that Conjecture 1 implies that 
$\mathcal R_{A_w,B_z,C_w,D_z}$
 is, to leading order as $T\rightarrow\infty$, a function of $z+w$.  We therefore make the 
 change of variable $s=z+w$; now the integration in the $s$ variable is
 on the vertical line $\Re s=4$. We retain $z$ as our
 other variable and integrate over it. This turns out to be the integral
 $$\frac{1}{2\pi i} \int_{\Re z = 2}\frac{dz}{z(s-z)}=\frac{1}{s}$$
  as is seen by moving the path of integration to the left to $\Re z=-\infty$.
Thus we have that $\mathcal M_{A,B,C,D}(T;X)$ is given to leading order by
\begin{eqnarray*}
 \frac{1}{2\pi i}
  \int_{\Re s=4} \frac{X^{s}}{s} \mathcal R_{
A_s,B,C_s,D}(T)  ~ds.
\end{eqnarray*}
Moving the path of integration to $\Re s=\epsilon$, avoiding any poles, inserting Conjecture 1, and noting that
$$\mathcal B_{A_s,B,C_s,D}(1)=\mathcal B_{A,B,C,D}(s+1),$$
we have that the uniform ratios conjecture implies the conclusion of Theorem 2.

  \section{Approach via coefficient correlations}
  We follow the approach developed by Goldston and Gonek [GG] in their work  
  on mean-values of long Dirichlet polynomials.

Expanding the sums and integrating term-by-term, we have
$$\mathcal M_{\alpha,\beta,\gamma,\delta}(T;X)=T\sum_{m,n\le X} 
\frac{I_{A,C}(m)I_{B,D}(n)}{\sqrt{mn}}\hat\psi
\left(\frac{T}{2\pi}\log\frac mn\right).
$$
\subsection{Diagonal} The diagonal term is 
$$T\hat{\psi}(0) \sum_{m\le X} 
 \frac{I_{A,C}(m)I_{B,D}(m)}{m}.
 $$
 By Perron's formula this sum is 
 \begin{eqnarray*}
 \frac{1}{2\pi i}\int_{(2)} \mathcal B_{A,B,C,D}(s+1)
 \frac{X^s}{s} ~ds.
 \end{eqnarray*}
 
\subsection{Off-diagonal}
 For the off-diagonal terms we  need to analyze
 $$2T\sum_{T\le m \le X}\sum_{1\le h \le \frac{X}{T}} \frac{I_{A,C}(m)
 I_{B,D}(m+h)}{m}\hat\psi
\left(\frac{Th}{2\pi m } \right).
$$
We replace the arithmetic terms by their average and express this as 
 $$2T\int_T^X \sum_{1\le h \le \frac{X}{T}} \frac{\langle I_{A,C}(m)I_{B,D}(m+h)\rangle _{m\sim u}
 }{u}\hat\psi
\left(\frac{Th}{2\pi u } \right) ~du.
$$
We now compute the average  heuristically via the delta-method [DFI]:
$$\langle I_{A,C}(m)I_{B,D}(m+h)\rangle _{m\sim u}
\sim \sum_{q=1}^\infty r_q(h) \langle I_{A,C}(m) e(m/q) \rangle_{m\sim u}
\langle I_{B,D}(m) e(m/q)  \rangle_{m\sim u}
$$
where $r_q(h)$ is the Ramanujan sum, a formula for which is $r_q(h)=
\sum_{d\mid h\atop d\mid q} d\mu(\frac q d )$.  This may be formalized as  a precise conjecture exactly as in Section 5 of [CK5].  It is this conjectural formula that we refer to in Theorem 2.
Now 
\begin{eqnarray*}
\langle I_{A,C}(m) e(m/q) \rangle_{m\sim u}
=\frac{1}{2\pi i}\int_{|w-1|=\epsilon}
\sum_{m=1}^\infty I_{A,C}(m)e(m/q) m^{-w} u^{w-1}~dw.
\end{eqnarray*}

The off-diagonal contribution is thus 
\begin{eqnarray*}&&
2T \sum_{1\le h \le \frac{X}{T}} \int_T^X
\frac{1}{(2\pi i)^2} \iint_{|w-1|=\epsilon\atop |z-1|=\epsilon} \sum_{q=1}^\infty r_q(h)
 \hat\psi
\left(\frac{Th}{2\pi u } \right)  u^{w+z-2} 
\\
&&\qquad \qquad \times \sum_{m_1=1}^\infty \frac{I_{A,C}(m_1)e(m_1/q)}{m_1^w}
\sum_{m_2=1}^\infty \frac{I_{B,D}(m_2)e(m_2/q)}{m_2^z}
~dw ~dz
~\frac{du}{u}.
\end{eqnarray*}
We next make the change of variables $v=\frac{Th}{2\pi u }$. The inequality $u\le X$ then implies that 
$\frac{Th}{2\pi v} \le X$ or $h\le \frac{2\pi v X}{T}$.   The above can be re-expressed as
\begin{eqnarray*}&&
2T\int_0^\infty \sum_{1\le h \le \frac{2\pi vX}{T}} 
\frac{1}{(2\pi i)^2} \iint_{|w-1|=\epsilon\atop |z-1|=\epsilon} \sum_{q=1}^\infty r_q(h)
 \hat\psi(v) \left(\frac{Th}{2\pi v}\right)^{w+z-2} 
\\
&&\qquad \qquad \times \sum_{m_1=1}^\infty \frac{I_{A,C}(m_1)e(m_1/q)}{m_1^w}
\sum_{m_2=1}^\infty \frac{I_{B,D}(m_2)e(m_2/q)}{m_2^z}
~dw ~dz
~\frac{dv}{v}.
\end{eqnarray*}
Using Perron's formula to express the sum over $h$ gives
\begin{eqnarray*}&&
2T\int_0^\infty  
\frac{1}{(2\pi i)^3}\int_{\Re s=2} \iint_{|w-1|=\epsilon\atop |z-1|=\epsilon} \sum_{q=1}^\infty 
\sum_{h=1}^\infty \frac{r_q(h)}{h^s}
 \hat\psi(v) \left(\frac{Th}{2\pi v}\right)^{w+z-2} \left(\frac {2\pi v X}T\right)^{s} 
\\
&&\qquad \qquad \times \sum_{m_1=1}^\infty \frac{I_{A,C}(m_1)e(m_1/q)}{m_1^w}
\sum_{m_2=1}^\infty \frac{I_{B,D}(m_2)e(m_2/q)}{m_2^z}\frac{ds}{s}
~dw ~dz
~\frac{dv}{v}.
\end{eqnarray*}
Now 
\begin{eqnarray*}
2\int_0^\infty \hat{\psi}(v) v^A \frac{dv}{v}=\chi(1-A)\int_0^\infty \psi(t) t^{-A} ~dt.
\end{eqnarray*}
Incorporating this formula gives 
\begin{eqnarray*}&&
T\int_0^\infty  \psi(t) 
\frac{1}{(2\pi i)^3}\int_{\Re s=2} \iint_{|w-1|=\epsilon\atop |z-1|=\epsilon} \sum_{q=1}^\infty 
\sum_{h=1}^\infty \frac{r_q(h)}{h^{s+2-w-z}}
  \left(\frac{Tt}{2\pi }\right)^{w+z-2} \left(\frac {2\pi X}{tT}\right)^{s} \chi(w+z-s-1)
\\
&&\qquad \qquad \times \sum_{m_1=1}^\infty \frac{I_{A,C}(m_1)e(m_1/q)}{m_1^w}
\sum_{m_2=1}^\infty \frac{I_{B,D}(m_2)e(m_2/q)}{m_2^z}\frac{ds}{s}
~dw ~dz
~dt.
\end{eqnarray*}
By Theorem 1, this is 
\begin{eqnarray*}&&\sum_{\hat \alpha\in A\atop \hat \beta\in B}
\int_0^\infty  \psi\left(\frac t T\right) 
\frac{1}{2\pi i}\int_{\Re s=2}  \left(\frac{t}{2\pi }\right)^{-\hat \alpha-\hat \beta-s}
 \mathcal B_{A'\cup\{-\hat \beta\},B'\cup \{-\hat \alpha\},C,D}(s+1)\frac{X^s}{s}~ds
~dt
\end{eqnarray*}
where $A'=A-\{\hat \alpha\}$ and $B'=B-\{\hat \beta\}$.
 Adding the diagonal and off-diagonal terms, 
 we thus obtain that the  conjecture for the correlations of values of $I_{A,C}(n)$ also implies the
 conclusion of Theorem 2.

\section{Proof of Theorem 1}

First of all, we have
$$\sum_{h=1}^\infty \frac{r_q(h)}{h^A} =\sum_{h=1}^\infty \frac{ \sum_{g\mid q\atop g\mid h}g
\mu(\frac q g)}{h^A}=\sum_{g\mid q} g^{1-A} \mu(\frac q g ) \zeta(A)
=q^{1-A}\Phi(1-A,q)\zeta(A)$$
where
$$\Phi(x,q)=\prod_{p\mid q}\left(1-\frac{1}{p^x}\right).$$
Using this and the functional equation for $\zeta$, we have to evaluate
\begin{eqnarray*}&&
\frac{1}{(2\pi i)^2} \iint_{|w-1|=\epsilon\atop |z-1|=\epsilon} \sum_{q=1}^\infty 
q^{w+z-s-1}\Phi(w+z-s-1,q)
   \\
&&\qquad \qquad \times
\zeta(w+z-s-1)
 \sum_{m_1=1}^\infty \frac{I_{A,C}(m_1)e(m_1/q)}{m_1^w}
\sum_{m_2=1}^\infty \frac{I_{B,D}(m_2)e(m_2/q)}{m_2^z} 
~dw ~dz.
\end{eqnarray*}

We identify the polar structure of the Dirichlet series here by passing to characters
via the formula
$$e\left(\frac m q\right) = \sum_{d\mid m\atop d\mid q}
\frac{1}{\phi\left(\frac q d\right)}\sum_{\chi \bmod \frac q d } \tau(\overline{\chi})
\chi\left(\frac m d \right).$$
Assuming GRH, the only poles near $w=1$ arise from the principal characters 
$\chi_{\frac qd}^{(0)}$. Using 
$$\tau(\chi_{\frac qd}^{(0)})=\mu(\frac qd)$$
we have that  the poles of $\sum_{m=1}^\infty 
I_{A,C}(m)e(m/q) m^{-w}$ are the same as the poles of 
\begin{eqnarray*} &&
\sum_{d\mid q}\frac{\mu\left(\frac q d\right)}{\phi\left(\frac q d\right)}
 \sum_{m=1}^\infty I_{A,C}(md) \chi_{\frac qd}^{(0)}(m) m^{-w}d^{-w}\\
 &&\qquad = q^{-w}
 \sum_{d\mid q}\frac{\mu (d)}{\phi(d)}d^{w}
 \sum_{m=1}^\infty \frac{I_{A,C}(\frac{mq}d) \chi_{d}^{(0)}(m)}{ m^{w}} 
 \end{eqnarray*}
 and the principal parts are the same.
 We now replace $\chi_d^{(0)}(m)$ by $\sum_{e\mid d\atop e\mid m}\mu(e)$.
 This leads to
  \begin{eqnarray*}
 q^{-w}  \sum_{d\mid q}\frac{\mu (d)d^w}{\phi(d)}\sum_{e\mid d}\mu(e)e^{-w}
 \sum_{m=1}^\infty \frac{I_{A,C}(\frac{meq}d)  }{m^{w}} .
 \end{eqnarray*}
Now we need the polar structure of 
$$
 \sum_{m=1}^\infty I_{A,C}(mr)  m^{-w} 
 $$
 for $r=qe/d$.
 
 Since $I_{A,C}(n)$ is a multiplicative function of $n$, $I_{A,C}(nr)/I_{A,C}(r)$ is also a multiplicative function of $n$. The generating function  may therefore be expressed as an Euler product:
 \begin{eqnarray*}
 \sum_{n=1}^\infty 
 \frac{I_{A,C}(nr)/I_{A,C}(r)}{n^w}= \sum_{n=1}^\infty 
 \frac{I_{A,C}(n)}{n^w} 
 \prod_{p\mid r}
 \frac{
    \sum_{j=0}^\infty    \frac{I_{A,C}(p^{j+\lambda_r(p)})
    /I_{A,C}(p^{\lambda_p(r)}}{p^{jw}}   }
    {\sum_{j=0}^\infty 
     \frac{I_{A,C}(p^{j  })
    }{p^{jw}} }
 \end{eqnarray*}

 This gives 
 \begin{eqnarray*}
 \sum_{n=1}^\infty 
 \frac{I_{A,C}(nr)}{n^w}&=& 
 \frac{\prod_{\alpha\in A}\zeta(w+\alpha)}{\prod_{\gamma\in C}\zeta(w+\gamma)} \prod_{p\mid r}
 \frac{
    \sum_{j=0}^\infty    \frac{I_{A,C}(p^{j+\lambda_r(p)})
   }{p^{jw}}   }
    {\sum_{j=0}^\infty 
     \frac{I_{A,C}(p^{j  })
    }{p^{jw}} }\\
    &=&  \frac{\prod_{\alpha\in A}\zeta(w+\alpha)}{\prod_{\gamma\in C}\zeta(w+\gamma)} 
    E_{A,C}(w,r),
 \end{eqnarray*}
 say. 
In particular, the poles are at $w=1-\alpha$ for $\alpha\in A$.
Thus, the integral over $w$ and $z$ is 
\begin{eqnarray*}&&
 \sum_{\hat \alpha\in A\atop
\hat  \beta\in B} \sum_{q=1}^\infty 
q^{1-\hat \alpha-\hat \beta-s}\Phi(1-\hat \alpha-\hat \beta-s,q)\zeta(1-\hat \alpha-\hat \beta-s)
   \\
&&\qquad \qquad \times
 q^{-1+\hat\alpha}  \sum_{d_1\mid q}\frac{\mu (d_1)d_1^{1-\hat\alpha}}{\phi(d_1)}\sum_{e_1\mid d_1}\mu(e_1)e_1^{-1+\hat\alpha }
  q^{-1+\hat\beta}  \sum_{d_2\mid q}\frac{\mu (d_2)d_2^{1-\hat\beta}}{\phi(d_2)}\sum_{e_2\mid d_2}\mu(e_2)e_2^{-1+\hat\beta }
 \\&&
 \qquad \qquad \times
 \frac{\prod_{\alpha\in A'}\zeta(1-\hat\alpha +\alpha)\prod_{\beta\in B'}\zeta(1-\hat\beta+\beta)}{\prod_{\gamma\in C}\zeta(1-\hat \alpha +\gamma)\prod_{\delta\in D}\zeta(1-\hat \beta +\delta)} 
    E_{A,C}(1-\hat \alpha,\frac{qe_1}{d_1})
      E_{B,D}(1-\hat \beta,\frac{qe_2}{d_2}).
\end{eqnarray*}

 So we have to identify the Dirichlet series
 \begin{eqnarray*}&&
 \sum_{q=1}^\infty q^{-1-s}\Phi(1-\hat \alpha-\hat \beta-s,q)\mathcal E_{A,C}(1-\hat\alpha, q)
 \mathcal E_{B,D}(1-\hat\beta, q)\\&&\qquad =
 \sum_{r=1}^\infty \frac{\mu(r)}{r^{2-\hat\alpha-\hat\beta}}\sum_{q=1}^\infty \frac{
 \mathcal E_{A,C}(1-\hat\alpha, qr)
 \mathcal E_{B,D}(1-\hat\beta, qr)}{q^{1+s}}
 \end{eqnarray*}
 where we have made use of $\Phi(\xi ,q)=\sum_{r\mid q} \mu(r) r^{-\xi}$, and where 
 \begin{eqnarray*}
 \mathcal E_{A,C}(1-\hat\alpha, q)=
  \sum_{d\mid q}\frac{\mu (d)d^{1-\hat\alpha}}{\phi(d)}\sum_{e\mid d}\mu(e)e^{-1+\hat\alpha }
  E_{A,C}(1-\hat \alpha,\frac{qe}{d}).
   \end{eqnarray*}

 This itself may be expressed as an Euler product. So, let's assume $q=p^J$ with $J\ge 1$ and identify
 \begin{eqnarray*}&&
   \sum_{d\mid q}\frac{\mu (d)d^{1-\hat\alpha}}{\phi(d)}\sum_{e\mid d}\mu(e)e^{-1+\hat\alpha }
 E_{A,C}(1-\hat \alpha,\frac{qe}{d})\\&&
 = E_{A,C}(1-\hat \alpha,p^J)-\frac{p^{1-\hat \alpha}}{p-1}E_{A,C}(1-\hat \alpha,p^{J-1})
 +\frac{1}{p-1}E_{A,C}(1-\hat \alpha,p^{J})\\&&
 =  \frac{p}{p-1}E_{A,C}(1-\hat \alpha,p^J)-\frac{p^{1-\hat \alpha}}{p-1}E_{A,C}(1-\hat \alpha,p^{J-1}).
 \end{eqnarray*}
 Now we note the identity
 $$I_{A,C}(p^J)=I_{A',C}(p^J)+p^{-\alpha}I_{A,C}(p^{J-1})$$
 where $A=A'\cup \{\alpha\}$.
 Thus
 \begin{eqnarray*}
     \sum_{j=0}^\infty    \frac{I_{A,C}(p^{j+J})
   }{p^{jw}}  -p^{-\alpha}   \sum_{j=0}^\infty    \frac{I_{A,C}(p^{j+J-1})
   }{p^{jw}}
   = \sum_{j=0}^\infty    \frac{I_{A',C}(p^{j+J})
   }{p^{jw}} .
 \end{eqnarray*}
 Thus,
 \begin{eqnarray*}
 \mathcal E_{A,C}(1-\hat\alpha, p^J)&=& \frac{p}{p-1}
 \frac{
  \sum_{j=0}^\infty \frac{I_{A',C}(p^{j+J})
   }{p^{j(1-\hat \alpha)}}  }
   {  \sum_{j=0}^\infty \frac{I_{A,C}(p^{j})
   }{p^{j(1-\hat \alpha)}}   }\\
   &=& \frac{p}{p-1}
\frac{\prod_{\alpha\in A} (1-\frac{1}{p^{1-\hat \alpha+\alpha}})}
{\prod_{\gamma\in C} (1-\frac{1}{p^{1-\hat\alpha+  \gamma}})}
  \sum_{j=0}^\infty \frac{I_{A',C}(p^{j+J})
   }{p^{j(1-\hat \alpha)}} \\
   &=&
   \frac{\prod_{\alpha\in A'} (1-\frac{1}{p^{1-\hat \alpha+\alpha}})}
{\prod_{\gamma\in C} (1-\frac{1}{p^{1-\hat\alpha+  \gamma}})}
  \sum_{j=0}^\infty \frac{I_{A',C}(p^{j+J})
   }{p^{j(1-\hat \alpha)}} .
 \end{eqnarray*}
 
 Now, all that is left to do is to prove that
  \begin{eqnarray*}
 \sum_{\ell=0}^\infty \frac{\mu(p^\ell)}{p^{\ell(2-\hat \alpha-\hat \beta)}}\sum_{J=0}^\infty
 \frac{1}{p^J}\sum_{j=0}^\infty \frac{I_{A',C}(p^{j+\ell+J})}{p^{j(1-\hat \alpha)}}
 \sum_{k=0}^\infty \frac{I_{B',D}(p^{k+\ell+J})}{p^{k(1-\hat \beta)}}
 =\left(1-\frac{1}{p^{1-\hat \alpha-\hat \beta}}\right)\sum_{\ell=0}^\infty
 \frac{I_{A'\cup\{-\hat \beta\},C}(p^\ell)I_{B'\cup\{-\alpha\},D}(p^\ell)}{p^\ell}
 \end{eqnarray*}
 
  Temporarily let
 $X=1/p$, 
  $Y=p^{-\hat \alpha}$, 
   $Z=p^{-\hat \beta}$, $a_j=I_{A,C}(p^j)$, $a_j'=I_{A',C}(p^j)$, 
   $\tilde{a}_j=I_{A'\cup\{-\hat\beta\},C}(p^j)$; 
   and $b_k=I_{B,D}(p^k)$, $b_k'=I_{B',D}(p^k)$, $\tilde{b}_k=I_{B'\cup \{-\hat \alpha\},D}(p^k)$.
  Then the desired identity follows from the theorem of the next section.
 
  \section{The identity}
 \begin{theorem} Suppose that $a', b', \tilde{a}, \tilde{b}$ are sequences such that
  \begin{eqnarray*}
 \sum_{J=0}^\ell Z^{J-\ell} a'_{J}=\tilde{a}_\ell \qquad \sum_{K=0}^\ell Y^{K-\ell}b'_{K}=\tilde{b}_\ell.
\end{eqnarray*}
Then 
   \begin{eqnarray*}&&
   \sum_{J=0}^\infty \sum_{\ell=0}^{\min(1,J)}(-1)^\ell \frac{X^{2\ell+J}}{Y^\ell Z^\ell}\sum_{j=0}^\infty a'_{j+\ell+J}\left(\frac X Y\right)^j \sum_{k=0}^\infty b'_{k+\ell+J}\left(\frac X Z\right)^k = \left(1-\frac X{YZ}\right)\sum_{\ell=0}^\infty \tilde{a}_\ell\tilde{b}_\ell X^\ell.
   \end{eqnarray*}
   \end{theorem}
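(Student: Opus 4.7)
The approach is to verify the asserted identity coefficient-by-coefficient in the sequences $a'$ and $b'$. Both sides depend bilinearly on the pair $(a',b')$: visibly on the left, and on the right once one uses the hypothesis to write each $\tilde a_\ell$ and $\tilde b_\ell$ as a linear combination of the $a'_M$'s and $b'_N$'s. Hence it suffices to check that the coefficient of every monomial $a'_m b'_n$ agrees on the two sides, for each $m,n\ge 0$.

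I would handle the right-hand side first. Substituting the hypothesis gives $\tilde a_\ell\tilde b_\ell=\sum_{M\le\ell,\,N\le\ell}Z^{M-\ell}Y^{N-\ell}a'_M b'_N$, and swapping orders of summation shows that the coefficient of $a'_m b'_n$ in $\sum_{\ell\ge 0}\tilde a_\ell\tilde b_\ell X^\ell$ is the geometric series $Y^n Z^m\sum_{\ell\ge\max(m,n)}(X/(YZ))^\ell$. The denominator $1-X/(YZ)$ is exactly cancelled by the explicit prefactor, leaving the single monomial $X^{M}Y^{n-M}Z^{m-M}$ with $M=\max(m,n)$.

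On the left-hand side, the pins $a'_{j+\ell+J}=\delta_{m,\,j+\ell+J}$ and $b'_{k+\ell+J}=\delta_{n,\,k+\ell+J}$ force $j=m-\ell-J$ and $k=n-\ell-J$, so the admissible pairs $(J,\ell)$ are precisely those with $\ell+J\le K:=\min(m,n)$. A direct substitution shows that each admissible pair contributes exactly $(-1)^\ell X^{m+n-J}Y^{J-m}Z^{J-n}$: the absolute value is independent of $\ell$, and the opposite signs between $\ell=0$ and $\ell=1$ produce a telescoping in $J$. Summing over the permitted ranges, all interior $J$-values cancel and only the extreme $J=K$ term survives, giving $X^{m+n-K}Y^{K-m}Z^{K-n}$. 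Using $m+n-K=M$, $K-m=n-M$ and $K-n=m-M$, this coincides with the monomial produced on the right.

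The main obstacle is purely combinatorial bookkeeping: tracking the non-negativity constraints $j,k\ge 0$ together with the permitted $(J,\ell)$ pairs, and handling the degenerate cases $K=0$ and $K=1$ where the telescoping has only one or two contributing summands. Once those ranges are laid out, the rest is elementary, and the cancellation on the left mirrors the collapse of the geometric series on the right, producing the same monomial $X^{\max(m,n)}Y^{n-\max(m,n)}Z^{m-\max(m,n)}$ in both.
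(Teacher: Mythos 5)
Your argument is correct, and it proves the identity in the form the paper actually uses, but by a genuinely different route. The paper works with the generating series themselves: it splits the left side into its $\ell=0$ and $\ell=1$ pieces, rearranges $A_1B_1-A_2B_2=(A_1-A_2)B_1+A_2(B_1-B_2)$ so that the tails collapse to the single terms $a'_J$ and $b'_J$, reduces everything to $\sum_\ell(\tilde{a}_\ell b'_\ell+a'_\ell\tilde{b}_\ell-a'_\ell b'_\ell)X^\ell$, and then invokes the inverted relations $a'_\ell=\tilde{a}_\ell-\tilde{a}_{\ell-1}/Z$, $b'_\ell=\tilde{b}_\ell-\tilde{b}_{\ell-1}/Y$ to recognize the summand as $\tilde{a}_\ell\tilde{b}_\ell-\tilde{a}_{\ell-1}\tilde{b}_{\ell-1}/(YZ)$, whose sum against $X^\ell$ telescopes to the right side. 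You instead use bilinearity and extract the coefficient of each monomial $a'_mb'_n$: a geometric-series collapse on the right against a two-term cancellation over $J$ on the left, both producing $X^MY^{n-M}Z^{m-M}$ with $M=\max(m,n)$; your evaluation of both coefficients is correct (modulo the usual remark that the rearrangements are justified by absolute convergence, or by treating everything as formal series, exactly as the paper implicitly does). Your coefficient-extraction viewpoint makes the cancellation mechanism quite transparent; the paper's route avoids the $\max/\min$ bookkeeping and stays closer in form to the Euler-factor computation that precedes the theorem.

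One point needs flagging. Your set of admissible pairs, namely all $(J,\ell)$ with $\ell\in\{0,1\}$ and $\ell+J\le\min(m,n)$, includes the pair $(J,\ell)=(0,1)$, which the upper limit $\min(1,J)$ in the displayed statement would exclude. Your reading is in fact the right one: with the literal restriction $\ell\le\min(1,J)$ the asserted identity is false (take $a'_j=\delta_{j,1}$ and $b'_k=\delta_{k,1}$: the left side would be $X+X^2/(YZ)$ while the right side is $X$), because the $J=0$, $\ell=0$ contribution is then left uncancelled. The intended range, which is the one used both in the paper's own proof (whose $\ell=1$ block runs over all $J\ge0$) and in the application, where the factor $\mu(p^\ell)$ merely kills $\ell\ge2$, is $\ell\in\{0,1\}$ independent of $J$. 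So you have proved the statement as intended and as proved in the paper; just make your choice of range explicit, since including $(J,\ell)=(0,1)$ is precisely what makes your telescoping close up at $J=\min(m,n)$.
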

   \begin{proof}
The left side may be written as 
 \begin{eqnarray*}&&
   \sum_{J=0}^\infty   X^{ J} \sum_{j=0}^\infty a'_{j+J}\left(\frac X Y\right)^j \sum_{k=0}^\infty b'_{k+J}\left(\frac X Z\right)^k\\
   && \qquad  -
   \sum_{J=0}^\infty  X^J \sum_{j=0}^\infty a'_{j+1+J}\left(\frac X Y\right)^{j+1} \sum_{k=0}^\infty b'_{k+1+J}\left(\frac X Z\right)^{k+1}
   \end{eqnarray*}
   This may be rewritten as
   \begin{eqnarray*}&&
   \sum_{J=0}^\infty X^J \bigg[ \left( \sum_{j=0}^\infty a'_{j+J}\left(\frac X Y\right)^j -
    \sum_{j=0}^\infty a'_{j+1+J}\left(\frac X Y\right)^{j+1}\right) 
     \sum_{k=0}^\infty b'_{k+J}\left(\frac X Z\right)^k\\
   && \qquad  + \sum_{j=0}^\infty a'_{j+1+J}\left(\frac X Y\right)^{j+1}\left(
\sum_{k=0}^\infty b'_{k+J}\left(\frac X Z\right)^k-
   \sum_{k=0}^\infty b'_{k+1+J}\left(\frac X Z\right)^{k+1}
\right) \bigg]
\end{eqnarray*}
which simplifies to
  \begin{eqnarray*}&&
   \sum_{J=0}^\infty X^J \bigg[ a'_J     \sum_{k=0}^\infty b'_{k+J}\left(\frac X Z\right)^k 
    + b'_J \sum_{j=0}^\infty a'_{j+1+J}\left(\frac X Y\right)^{j+1} \bigg]
\end{eqnarray*}
Now
 \begin{eqnarray*}&&
   \sum_{J=0}^\infty X^J a'_J     \sum_{k=0}^\infty b'_{k+J}\left(\frac X Z\right)^k 
   =\sum_{\ell=0}^\infty X^\ell b'_\ell \sum_{J=0}^\ell a'_J Z^{J-\ell}=
   \sum_{\ell=0}^\infty \tilde{a}_\ell b'_\ell X^\ell .
    \end{eqnarray*}
    And
     \begin{eqnarray*}&&
   \sum_{J=0}^\infty X^J b'_J  \sum_{j=0}^\infty a'_{j+1+J}\left(\frac X Y\right)^{j+1}  \\
   && \qquad 
   =
    \sum_{J=0}^\infty X^J b'_J   \sum_{j=0}^\infty a'_{j+J}\left(\frac X Y\right)^{j} 
    - \sum_{J=0}^\infty a'_Jb'_J X^J\\
    &&\qquad = \sum_{\ell=0}^\infty a'_\ell \tilde{b}_\ell X^\ell  
    - \sum_{\ell=0}^\infty a'_\ell b'_\ell X^\ell.
\end{eqnarray*}
Thus, the left side of the identity is
\begin{eqnarray*}
\sum_{\ell=0}^\infty \tilde{a}_\ell b'_\ell X^\ell
+ \sum_{\ell=0}^\infty a'_\ell \tilde{b}_\ell X^\ell
    - \sum_{\ell=0}^\infty a'_\ell b'_\ell X^\ell.
\end{eqnarray*}
But $a'_\ell=\tilde{a}_\ell-\frac{\tilde{a}_{\ell-1}}{Z}$ and  $b'_\ell=\tilde{b}_\ell-\frac{\tilde{b}_{\ell-1}}{Y}$ 
so that 
\begin{eqnarray*}
  \tilde{a}_\ell b'_\ell
+ a'_\ell \tilde{b}_\ell 
    - a'_\ell b'_\ell=\tilde{a}_\ell \tilde{b}_\ell -\frac{\tilde{a}_{\ell-1}\tilde{b}_{\ell-1}}{YZ}.
    \end{eqnarray*}
    The sum over $\ell$ of this expression times $X^\ell$ gives the right side of the identity.
    \end{proof}
  
   The reader may have noticed the similarity between this identity and the corresponding identity that formed the crux of [CK3].


\begin{thebibliography}{[CFKRS]}
  
\bibitem[BK1]{journal}
E. B.  Bogomolny and J. P. Keating.  Random matrix theory and the Riemann zeros I: 
three- and four-point correlations. Nonlinearity 8 (1995), 1115--1131.

\bibitem[BK2]{journal}
 E. B.  Bogomolny and J. P. Keating. Random matrix theory and the Riemann zeros II: 
$n$-point correlations. Nonlinearity 9 (1996), 911--935.

\bibitem[CFZ]{journal}
J. B. Conrey, D. W. Farmer and M. R. Zirnbauer. 
Autocorrelation of ratios of $L$-functions. 
Commun. Number Theory Phys. 2 (2008), 593--636.

\bibitem[CK1]{CKI}
J.B. Conrey and J.P. Keating.  Moments of zeta and correlations of divisor-sums: I. Phil. Trans. R. Soc. A 373 (2015), 20140313;
 arXiv:1506.06842  

\bibitem[CK2]{CKII}
J.B. Conrey and J.P. Keating.  Moments of zeta and correlations of divisor-sums: II. In Advances in the Theory of Numbers -- Proceedings of the Thirteenth Conference of the Canadian Number Theory Association, Fields Institute Communications (Editors: A. Alaca, S. Alaca \& K.S. Williams), 75--85 (2015, Springer);
 arXiv:1506.06843 

\bibitem[CK3]{CKIII}
J.B. Conrey and J.P. Keating.  Moments of zeta and correlations of divisor-sums: III. Indagationes Mathematicae 26 (2015), no. 5, 736--747;  
arXiv:1506.06844 

\bibitem[CK4]{CKIV}
J.B. Conrey and J.P. Keating. Moments of zeta and correlations of divisor-sums: IV. Res. Number Theory 2 (2016),  2:24;  
arXiv:1506.06844 

\bibitem[CK5]{CK} J. B. Conrey and J. P. Keating. 
Pair correlation and twin primes revisited. Proc. R. Soc. A 472 (2016), 20160548;
arXiv:1604.06124

 \bibitem[CSn]{CSn}
J. B.  Conrey and N. C. Snaith. 
Applications of the L-functions ratios conjectures. Proc. Lond. Math. Soc. (3) 94 (2007), no. 3, 594--646.

\bibitem[DFI]{journal}
 W. Duke,J. B. Friedlander, and H. Iwaniec.  A quadratic divisor problem. Invent. Math. 115 (1994), no. 2, 209--217. 

\bibitem[GG]{journal}
D. A.  Goldston and S. M.  Gonek.  Mean value theorems for long Dirichlet polynomials and tails of 
Dirichlet series. Acta Arith. 84 (1998), no. 2, 155--192.

\end{thebibliography}
 \end{document}